\newtheorem{thm}{Theorem}[section]
\newtheorem*{thmw}{Theorem}
\newtheorem{prp}[thm]{Proposition}
\newtheorem{lem}[thm]{Lemma}
\newtheorem{dfn}{Definition}[section]
\newcommand{\R}{\mathbb{R}}
\newcommand{\Z}{\mathbb{Z}}
\newcommand{\To}{\rightarrow}
\newcommand{\MTo}{\mapsto}
\title[Equivariant smoothing of piecewise linear manifolds]{Equivariant smoothing of piecewise linear manifolds}
\author{Christian Lange}
\address{Christian Lange, Mathematisches Institut der Universit\"at zu K\"oln, Weyertal 86-90, 50931 K\"oln, Germany}
\email{clange@math.uni-koeln.de}
\thanks{Supported by a `Kurzzeitstipendium f\"{u}r Doktoranden' by the German Academic Exchange Service (DAAD). The content of this paper is part of the author's thesis written at the University of Cologne \cite{Lange}.}
\subjclass{57M50, 57Q91, 57R10}
\begin{document}

\begin{abstract} We prove that every piecewise linear manifold of dimension up to four on which a finite group acts by piecewise linear homeomorphisms admits a compatible smooth structure with respect to which the group acts smoothly.
\end{abstract}
\maketitle	

\section{Introduction}
\label{sec:Pse_Introduction}

A piecewise linear- and a smooth structure on a manifold $M$ are called \emph{compatible} with each other, if there exists a triangulation of $M$ as a piecewise linear manifold all of whose simplices are smoothly embedded with respect to the smooth structure. Due to a theorem by Whitehead every smooth manifold $M$ admits a unique compatible piecewise linear structure \cite{Whitehead,MR0198479}. An equivariant version of this result for smooth actions of finite groups on $M$ holds by a theorem of Illman \cite{MR0500993}. Conversely, any piecewise linear manifold of dimension $n\leq 7$ admits a \emph{smoothing}, i.e. a compatible smooth structure \cite{MR0415630,MR0645390,MR0488059,MR0229250,MR0148075}. We show that piecewise linear manifolds in dimension $n \leq 4$ can be equivariantly smoothed in the following sense.
\begin{thmw} Let $M$ be a piecewise linear manifold of dimension $n\leq 4$ on which a finite group $G$ acts by piecewise linear homeomorphisms. Then there exists a compatible $G$-equivariant smooth structure on $M$, i.e. a smoothing with respect to which $G$ acts smoothly.
\end{thmw}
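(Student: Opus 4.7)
The plan is to work inductively over the orbit-type stratification of the $G$-action. As a preliminary, by passing to a suitable barycentric subdivision of a given triangulation, one obtains a $G$-invariant PL triangulation $K$ of $M$ in which every simplex stabilizer acts by the identity on the simplex it fixes; in particular, the fixed sets $M^H$ and the orbit strata $M_{(H)}$ are PL subcomplexes, and the closed star of a vertex $x$ is a PL cone on its link $L_x$ on which the isotropy group $G_x$ acts by PL homeomorphisms.

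The key local reduction is that a smooth $G_x$-invariant structure on the star of $x$ can be obtained by coning a smooth $G_x$-action on the link $L_x$ that is PL conjugate to an \emph{orthogonal} action on the standard sphere $S^{n-1}$. Since $\dim L_x \leq 3$, I would invoke linearization of finite PL group actions on low-dimensional spheres: for dimension at most $2$ this is classical, and for dimension $3$ it follows from the equivariant geometrization of closed $3$-orbifolds due to Dinkelbach--Leeb. This delivers a $G_x$-equivariant smoothing of a star neighborhood of each vertex of $K$; translating along $G$-orbits extends this consistently over each orbit of vertices.

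To assemble the local structures into a global $G$-invariant smooth structure compatible with $K$, I would proceed by induction on the skeleta of $K$, extending a smoothing from a $G$-invariant open neighborhood of the $(k-1)$-skeleton to one of the $k$-skeleton using equivariant collars. The comparison of two smoothings on such an overlap is governed by an obstruction in equivariant cohomology with coefficients in $\pi_*(\mathrm{PL}/\mathrm{O})$; since these coefficient groups vanish in the range relevant to $\dim M \leq 4$, the local smoothings combine to a global one, and one invokes an equivariant analogue of the Hirsch--Mazur/Munkres uniqueness up to concordance to make the patching canonical.

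The principal technical obstacle, in my view, is the four-dimensional step: one must verify that the smoothing produced on the link $L_x \cong S^3$ by the $G_x$-linearization is $G_x$-concordant to the standard smooth structure on $S^3$, so that it genuinely extends smoothly and equivariantly over the cone. Non-equivariantly this follows from $\pi_3(\mathrm{PL}/\mathrm{O}) = 0$; the equivariant refinement requires a concordance argument carried out compatibly with the stratification of $S^3$ by $G_x$-fixed sets, and this is where I expect the bulk of the work to lie.
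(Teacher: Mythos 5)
Your outline shares the paper's skeleton --- equivariant triangulation, induction over skeleta, and linearization of the isotropy action on the link of a vertex via Dinkelbach--Leeb --- but the assembly step as you describe it contains a genuine gap. You propose to patch the local smoothings by an obstruction argument ``in equivariant cohomology with coefficients in $\pi_*(\mathrm{PL}/\mathrm{O})$'' and to conclude from the vanishing of these groups. No such equivariant obstruction theory is available, and the vanishing of $\pi_i(\mathrm{PL}/\mathrm{O})$ for $i\leq 6$ cannot be the operative reason the theorem holds: if it were, the same argument would smooth equivariantly in dimensions $5$, $6$ and $7$, whereas the suspension of a PL involution of $S^4$ with knotted fixed $S^2$ gives a PL $\Z_2$-action on $S^5$ that admits \emph{no} equivariant smoothing. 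The true reason the result holds for $n\leq 4$ is that smooth finite group actions on $S^m$, $m\leq 3$, are linearizable and smoothings in these dimensions are unique --- facts with no analogue in higher dimensions --- and the bulk of the work is not a concordance computation but a concrete extension problem: given a $G_x$-equivariant PD homeomorphism from the link to the round sphere, one must extend it over the cone as a PD homeomorphism to the ball. The naive radial (linear) extension fails because the restriction to a simplex can degenerate at the cone point; the paper resolves this by isotoping $f$ radially through spherical and then flat simplices, which requires $C^1$-approximation of equivariant PD maps by equivariant PL secant maps on $G$-subdivisions of uniformly bounded thickness. Your proposal does not address this degeneracy at the cone point, which is precisely where the compatibility of the smoothing with the PL structure is at stake.

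A second, related issue: you invoke ``linearization of finite \emph{PL} group actions'' on $S^3$ via Dinkelbach--Leeb, but their theorem linearizes \emph{smooth} actions. Applying it to the PL action of $G_x$ on the link presupposes an equivariant smoothing of that link compatible with the ambient construction --- which is essentially the statement being proved one dimension down, and moreover must agree with the smoothing already built near the higher skeleta. The paper arranges this by first smoothing the complement of a neighbourhood of the $(n-2)$-skeleton, working inward, and then inserting an auxiliary equivariant PD embedding $H$ that deforms the ambient smooth structure so that a scaled copy $\lambda\cdot P_x$ of the vertex link becomes a smooth $G_x$-invariant submanifold of the already-smoothed region; only then do linearization and the cone-extension lemma apply, and the resulting ball is glued back along a PL diffeomorphism of boundaries. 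Without some substitute for this step, your local cone smoothings need not match the smoothing near the lower-dimensional strata, and the induction does not close.
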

In dimension three our proof solves a challenge by Thurston \cite[p.~208]{MR1435975}. In dimension four the result confirms a conjecture by Kwasik and Lee \cite{MR948910} and guarantees in addition the compatibility condition. In dimension higher than four the statement of the theorem is false, even without the compatibility condition (cf. \cite{MR948910} or Section \ref{sub:higher_dim} below). 

It is always possible to find a piecewise linear triangulation of $M$ with respect to which the group $G$ acts simplicially (cf. Section \ref{sub:pl_structure}). For $n=1$ we can choose the lengths of the segments of such a triangulation so that $G$ acts isometrically. In this way we obtain a desired smoothing. In a similar manner we obtain a canonical smoothing in the case $n=2$ away from the vertices of a triangulation. After a modification in neighbourhoods of the vertices it is possible to extend the smoothing to the whole complex (cf. Section \ref{sub:2-manifold}). In the case $n=3$ Thurston remarks that one probably needs some ``heavy machinery such as the uniformisation theorem for Riemannian metrics on $S^2$, used with ingenuity''. The uniformisation theorem implies that smooth actions of finite groups on $S^2$ are smoothly conjugate to linear actions (cf. Section \ref{sub:linearization}). The corresponding property for $S^3$ is assumed in \cite{MR948910} when conjecturing our result in dimension $4$ without the compatibility condition and has later been proven by Dinkelbach and Leeb \cite{MR2491658} using Ricci flow techniques. Indeed, it turns out that the key ingredients for a proof of our result in dimension three and four are uniqueness of smoothings and the linearisability of finite smooth group actions on spheres up to dimension three. Given these ingredients it seems that our result is rather to be expected, but we believe that having it explicitly written out in the literature can be useful. In fact, we apply it in the proof of the only if direction of the following characterisation. A finite subgroup $G<O(n)$ is generated by transformations $g\in G$ with $\mathrm{rank}(r-I)=2$ if and only if $\R^n/G$ is a piecewise linear manifold \cite{LaMik, La, Lange}.

\section{Preliminaries}

\subsection{Piecewise linear spaces}
\label{sub:pl_structure}

In this section we prove a statement (Proposition \ref{prp:triangulation}) that enables us to give a more workable formulation of our main result (cf. Section \ref{sec:main_result}). First we remind of some concepts from piecewise linear topology. For more details we refer to \cite{MR0248844,MR0350744}. A subset $P\subset \R^n$ is called a \emph{polyhedron} if, for every point $x \in P$, there exists a finite number a simplices contained in $P$ such that their union is a neighbourhood of $x$ in $P$. An open subset of a polyhedron is again a polyhedron. Every polyhedron $P$ in $\R^n$ is the underlying space of some (locally finite) simplicial complex $K$ \cite[Lem.~3.5]{MR0248844}. Such a complex is called a \emph{triangulation} of $P$. A continuous map $f:P\To Q$ between polyhedra $P\subset \R^n$, $Q\subset \R^m$ is called \emph{piecewise linear} (\emph{PL}), if its graph $\{(x,f(x))|x \in P\} \subset \R^{m+n}$ is a polyhedron. It is called \emph{PL homeomorphism}, if it has in addition a PL inverse. This is the case if and only if there exist triangulations of $P$ and $Q$ with respect to which $f$ is a simplicial isomorphism \cite[p.~84, Thm.~3.6.C]{MR0248844}. A polyhedron $P$ is called a \emph{PL manifold} (\emph{with boundary}) of dimension $n$, if every point $p\in P$ has an open neighbourhood in $P$ that is PL homeomorphic to $\R^n$(or to $\R_{\geq 0} \times \R^{n-1}$). If a simplicial complex $K$ triangulates a polyhedron $P$, then $P$ is a PL $n$-manifold, if and only if the link of every vertex of $K$ is a PL $(n-1)$-sphere, i.e. PL homeomorphic to $\partial \Delta^n$. PL manifolds can also be defined as abstract spaces with a \emph{PL structure} \cite[Ch. 3]{MR0248844}. However, every such space can be realised as a polyhedron in some $\R^N$ \cite[Lem.~3.5, p.~80]{MR0248844}\cite[Thm.~7.1, p.~53]{MR0488059}. The following statement is certainly known, but the author has not found a reference.

\begin{prp}\label{prp:triangulation}
A piecewise linear manifold $M$ on which a finite group $G$ acts by piecewise linear homeomorphisms can be triangulated by a simplicial complex $K$ such that $G$ acts simplicially on $|K|$, i.e. it maps simplices linearly onto simplices.
\end{prp}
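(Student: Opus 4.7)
The plan is to first refine an arbitrary triangulation of $M$ so that every $g\in G$ acts affinely on each simplex, then form a canonical $G$-invariant polyhedral cell decomposition by intersection, and finally triangulate this cell decomposition equivariantly by barycentric subdivision. In detail, start with any triangulation $K_0$ of $M$. Since each $g\in G$ is PL, the subdivision theorem for PL maps (\cite[p.~84, Thm.~3.6.C]{MR0248844}) provides subdivisions $A_g, B_g$ of $K_0$ such that $g\colon |A_g|\to |B_g|$ is simplicial. As $G$ is finite, the collection $\{A_g, B_g : g\in G\}$ admits a common subdivision $L$ by standard PL arguments. Since $L$ refines each $A_g$, the restriction $g|_\tau$ is then affine on every closed simplex $\tau$ of $L$ and every $g\in G$.

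Next, I form the common refinement $\mathcal{C}$ of the cell decompositions $\{g(L) : g\in G\}$: its relatively open cells are the connected components of the intersections $\bigcap_{g\in G} g(\mathrm{int}\,\sigma_g)$, with $\sigma_g$ ranging over the simplices of $L$. In a local PL chart these are intersections of convex open simplices, hence relatively open convex polytopes, so $\mathcal{C}$ is a locally finite convex polyhedral cell complex on $M$. The construction is manifestly $G$-invariant, so $G$ permutes the cells of $\mathcal{C}$; moreover, taking the factor for $g=e$, each cell lies inside a single simplex of $L$, so every $g\in G$ restricts to an affine map on each cell of $\mathcal{C}$.

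Finally, I perform the equivariant barycentric subdivision of $\mathcal{C}$, defining $K$ to be the simplicial complex whose vertices are the barycenters $b_c$ of the cells $c\in \mathcal{C}$ and whose simplices are the affine simplices spanned by $(b_{c_0},\ldots,b_{c_k})$ for strictly ascending chains $c_0\subsetneq\cdots\subsetneq c_k$ of cells of $\mathcal{C}$. Since $g$ is affine on each cell, $g(b_c)=b_{g(c)}$; hence $G$ permutes the vertices and simplices of $K$, and on each simplex $g$ agrees with an affine bijection between affine simplices, which automatically sends vertices to vertices. Thus $G$ acts simplicially on $|K|=M$, as required.

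The main point requiring care is that the intersection cells genuinely form a regular polyhedral cell complex, in which faces of cells are cells and any two cells meet in a common face, so that its barycentric subdivision is a well-defined simplicial complex triangulating $M$. This is a local, routine verification carried out in a PL chart, where it reduces to the elementary fact that a finite family of convex simplices in $\R^n$ induces a convex polyhedral decomposition of their union of intersections.
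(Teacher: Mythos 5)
Your proof is correct and follows essentially the same route as the paper's: first subdivide so that each $g\in G$ is affine on every simplex, then intersect the $G$-translates of this subdivision to obtain a $G$-invariant convex cell complex on which $G$ acts cellularly, and finally triangulate it canonically via barycenters so that the action becomes simplicial. The only cosmetic differences are that the paper intersects the subdivisions $L_g$ directly as cell complexes (rather than passing to a common simplicial subdivision first) and then subdivides the invariant cell complex by starring cells at their barycenters skeleton by skeleton, whereas you use the full order-complex barycentric subdivision; both are canonical and hence equivariant.
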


To prove it we need the notion of a (locally finite) \emph{cell-complex} (cf. \cite{MR0350744}). It can be defined like a simplicial complex, but it is not built up merely from simplices, but more generally from compact convex polyhedra, the so-called \emph{cells} (cf. \cite[pp.~14-15]{MR0350744}). The linear image of a cell is again a cell and the intersection $K\cap L = \{A\cap B|A\in K, B\in L \}$ of two cell complexes $K$ and $L$ is again a cell complex. A \emph{subdivision} of a cell complex $K$ is a simplicial complex $\tilde{K}$ such that $|K|=|\tilde{K}|$ and such that every simplex of $\tilde{K}$ is contained in a cell of $K$. The \emph{$k$-skeleton} $K_{(k)}$ of a cell complex $K$ is the cell complex comprising all cells of $K$ of dimension smaller or equal to $k$. We have
\begin{lem}\label{lem:subdivision_cell_simplex}
Any cell complex can be canonically subdivided into a simplicial complex.
\end{lem}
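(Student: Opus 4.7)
The plan is to build the subdivision inductively over the $k$-skeleton $K_{(k)}$, using the centroid of each cell as a canonical coning point. Concretely, set $\tilde{K}_{(0)} = K_{(0)}$, and assume inductively that a simplicial subdivision $\tilde{K}_{(k-1)}$ of $K_{(k-1)}$ has been constructed canonically from $K_{(k-1)}$. For each $k$-cell $C$ of $K$, the boundary $\partial C$ is a union of proper faces of $C$, each of which is a cell in $K_{(k-1)}$, so $\partial C$ already inherits a simplicial subdivision $\tilde{\partial C}$ from $\tilde{K}_{(k-1)}$. Let $b_C$ denote the centroid of $C$, i.e.\ the average of its vertices; this is an intrinsically defined point in the relative interior of $C$. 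Extend $\tilde{K}_{(k-1)}$ to a subdivision $\tilde{K}_{(k)}$ of $K_{(k)}$ by adjoining, for every $k$-cell $C$ and every simplex $\sigma \in \tilde{\partial C}$, the join $b_C * \sigma$.

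Three things need to be verified. First, each $b_C * \sigma$ is a genuine simplex: since $\sigma$ lies in $\partial C$ while $b_C$ is in the relative interior of $C$, the point $b_C$ is affinely independent of the vertices of $\sigma$. Second, the collection is a simplicial complex. Inside a single cell $C$, two simplices $b_C * \sigma_1$ and $b_C * \sigma_2$ intersect in $b_C * (\sigma_1 \cap \sigma_2)$, which is again in the collection; a simplex $b_C * \sigma$ with a simplex $\tau \subset \partial C$ intersects in $\sigma \cap \tau$, which is a common face by the inductive hypothesis. Across two distinct $k$-cells $C_1, C_2$ the intersection $C_1 \cap C_2$ is a common face of strictly lower dimension, so the relevant simplices already lie in $\tilde{K}_{(k-1)}$ and match by induction. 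Third, canonicity follows from the fact that the centroid $b_C$ depends only on $C$ as a cell, hence only on $K$.

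The main point where care is needed is the compatibility across adjacent top-dimensional cells, and it is exactly here that the choice of \emph{canonical} interior points matters: if one allowed arbitrary interior points as cone apices, the cones attached from $C_1$ and from $C_2$ to their shared face could disagree, but the inductive centroid recipe ensures that the subdivision of $C_1 \cap C_2$ has already been fixed once and for all in $\tilde{K}_{(k-1)}$ before either cone is formed. The rest is routine bookkeeping, and iterating over $k$ up to the dimension of $K$ yields the desired canonical simplicial subdivision $\tilde{K}$.
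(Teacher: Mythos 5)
Your proof is correct and follows essentially the same route as the paper: both subdivide the skeleta inductively and star each $k$-cell at its barycenter (your vertex centroid is just one canonical choice of barycenter), coning off the already-subdivided boundary. The extra verifications you spell out are exactly the standard facts the paper delegates to the reference on starring cells, so there is nothing further to add.
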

\begin{proof} The $1$-skeleton of $K$ is already a simplicial complex. We successively subdivide the $2,\ldots,n$-skeleton of $K$. Assume that we have already subdivided the $2,\ldots,k$-skeletons of $K$. Then we \emph{star} the $(k+1)$-cells of its $(k+1)$-skeleton at their barycenters, i.e. we replace each $(k+1)$-cell $C$ by the simplicial complex obtained as the join of the boundary $\partial C$, which is already a simplicial complex, with the barycenter of $C$ (cf. \cite[p.~15]{MR0350744}). Having replaced the $n$-cells we arrive at a simplicial complex that subdivides our initial cell complex.
\end{proof}
With the same method one can prove
\begin{lem}\label{lem:extend_subdivision} Let $K_1$ be a subcomplex of a cell complex $K$. Then any simplicial subdivision of $K_1$ can be extended to a simplicial subdivision of $K$.
\end{lem}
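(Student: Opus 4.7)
The plan is to mimic the barycentric‐starring induction used in the proof of Lemma \ref{lem:subdivision_cell_simplex}, but carry along the prescribed subdivision of $K_1$ as ``initial data'' at every stage.

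Let $\tilde K_1$ denote the given simplicial subdivision of $K_1$. I would induct on the dimension $k$, proving that $K_1\cup K_{(k)}$ admits a simplicial subdivision that restricts to $\tilde K_1$ on $K_1$ and to a cell complex refinement of $K_{(k)}$. For $k=1$ the $1$-skeleton of $K$ consists of vertices and edges; edges contained in $K_1$ are already simplicially subdivided by $\tilde K_1$, while edges not in $K_1$ are themselves $1$-simplices, so nothing needs to be done outside of $K_1$. The starting subdivision of $K_1\cup K_{(1)}$ is obtained by taking $\tilde K_1$ on $K_1$ and the existing edges on the rest.

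For the inductive step, assume $K_1\cup K_{(k)}$ has been subdivided simplicially in a way extending $\tilde K_1$. Take a $(k+1)$-cell $C$ of $K$. If $C\subset K_1$, then $\tilde K_1$ already provides a simplicial subdivision of $C$ and no further action is required. If $C\not\subset K_1$, then $\partial C\subset K_1\cup K_{(k)}$ is, by the inductive hypothesis, a simplicial complex refining $\partial C$; I then star $C$ at its barycenter, replacing $C$ by the join of this triangulated boundary with the barycenter of $C$, exactly as in the proof of Lemma \ref{lem:subdivision_cell_simplex}. Doing this simultaneously for every $(k+1)$-cell outside $K_1$ produces a simplicial subdivision of $K_1\cup K_{(k+1)}$ extending $\tilde K_1$, because on the intersection of two such cells the boundary subdivision agrees by induction. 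After $k=n$ we obtain a simplicial subdivision of all of $K$ extending $\tilde K_1$.

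The only point that requires a moment's care is the compatibility on the boundary of a $(k+1)$-cell $C\not\subset K_1$ that meets $K_1$: the portion of $\partial C$ lying in $K_1$ is subdivided by $\tilde K_1$, while the rest is subdivided by the inductive construction, and one must check these two subdivisions agree on their common intersection. This is automatic, however, since both are carried along coherently by the inductive hypothesis on $K_1\cup K_{(k)}$, so there is no real obstacle; the argument is essentially just bookkeeping on top of the proof of Lemma \ref{lem:subdivision_cell_simplex}.
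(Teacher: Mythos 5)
Your proof is correct and takes exactly the approach the paper intends: the paper gives no written argument for this lemma, asserting only that it follows ``with the same method'' as Lemma \ref{lem:subdivision_cell_simplex}, i.e.\ by inductively starring the cells outside $K_1$ at their barycenters while carrying the prescribed triangulation of $K_1$ along as fixed data. Your check that the starred boundaries agree with $\tilde K_1$ on common faces is the one point needing care, and you handle it correctly via the inductive hypothesis on $K_1\cup K_{(k)}$.
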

Now we can give a proof for Proposition \ref{prp:triangulation}.
\begin{proof}[Proof of Proposition \ref{prp:triangulation}.] We first triangulate $M$ by a simplicial complex $K \subset \R^N$. For each $g \in G$ we can choose a subdivision $L_g$ of $K$ such that $g$ maps simplices of $L_g$ linearly into simplices of $K$ (cf. \cite[Thm.~3.6, B, p.~84]{MR0248844}). Let $L$ be the cell complex obtained by intersecting the cell complexes $L_g$. Then the restriction of each element $g\in G$ to each cell of $L$ is linear. Therefore, the translates $gL$ are again cell complexes. Hence, their intersection $\bigcap_{g\in G} gL$ is a cell complex on which $G$ acts cellularly, i.e. it maps cells linearly onto cells. Now we apply Lemma \ref{lem:subdivision_cell_simplex} to this complex. By construction, the group $G$ acts simplicially on the resulting simplicial complex.
\end{proof}

Finally we fix some notations. Let $K$ be a simplicial complex. We denote its first \emph{barycentric subdivision} (cf. \cite[p.~119]{MR1867354}) by $K^{(1)}$. The \emph{support} $\mathrm{supp}_K(x)$ of a point $x$ in $K$ is defined to be the smallest dimensional simplex of $K$ that contains $x$. For a simplex $\sigma <K$ the \emph{star} $\mathrm{star}_{K}(\sigma)$ is the smallest simplicial complex that contains all simplices of $K$ that contain $\sigma$. The link of $\sigma$ is defined to be
\[
	\mathrm{lk}_{K}(\sigma) =\{\sigma' \in K | \sigma \cap \sigma' = \emptyset, \exists \tau \in K : \sigma, \sigma'<\tau \}. 
\]
We write $\mathrm{star}_{K}(\sigma)$ and $\mathrm{lk}_{K}(\sigma)$ interchangeably for the star and the link as a simplicial complex and as their underlying space. Also we sometimes omit the index $K$ if its meaning is clear. For a topological space $X$ we denote by $\overline{C}X$ its \emph{closed cone} defined as $(X\times [0,1])/(X\times \{0\})$. For a compact simplicial complex $K$ (in $\R^n$) its closed cone $\overline{C}K$ is again naturally a simplicial complex  (in $\R^{n+1}$).

\subsection{Piecewise differentiable maps and smoothings}\label{sec:smoothing_theory}
The following definition is central for comparing piecewise linear and smooth spaces.
\begin{dfn}
We call a map $f: P \To M$ from a polyhedron $P$ to a smooth manifold with or without boundary $M$ \emph{piecewise differentiable} or \emph{PD}, if there exists a triangulation $K$ of $P$ such that the restriction of $f$ to each simplex is smooth. We call $f$ a \emph{PD homeomorphism (embedding)}, if it is moreover a homeomorphism (onto its image) and each simplex is smoothly embedded, i.e. for each simplex $\sigma \in K$ and each point $p \in \sigma$ the differential $(df_{|\sigma})_p$ is injective. 
\end{dfn}

A smooth structure on a PL manifold with boundary $M$ is called \emph{compatible} with the PL structure of $M$, if the identity map from $M$ as a PL manifold to $M$ as a smooth manifold is a PD homeomorphism. A compatible smooth structure on $M$ is called a \emph{smoothing}. For the proof of our result we need the fact that smoothings in dimensions $n\leq3$ are unique up to diffeomorphism \cite[Thm. 3.10.9, p.~202]{MR1435975} (in fact, we only need this statement for $S^n$, $n\leq3$, cf. Lemma \ref{lem:rad_linearization}). In the case $n=3$ such a proof relies on the fact that every diffeomorphism of $S^2$ can be extended to a diffeomorphism of the corresponding unit ball (cf. \cite[Thm. 3.10.11, p.~202]{MR1435975}).

\subsection{Approximating PD maps by PL maps}\label{sec:PL_approx}

In this section we explain how PD maps can be approximated by PL maps. This will be needed in the proof of our main result (cf. Lemma \ref{lem:rad_extension}).
\begin{dfn}Two PD maps $f,\tilde{f}: P \To \R^n$ are called \emph{$C^1$ $\delta$-close}, if there exists a triangulation $K$ of $P$ such that for every simplex $\sigma \in K$ both $f_{|\sigma}$ and $\tilde{f}_{|\sigma}$ are smooth and the values of $(f-\tilde{f})_{|\sigma}$ and their first derivatives are bounded by $\delta$.
\end{dfn}

The following statement follows immediately from \cite[Thm. 8.8, p.~84, Thm. 8.4, p.~81]{MR0198479}.

\begin{thm}\label{thm:PD_approx_close} Let $f: P \To M \subset \R^n$ be a PD homeomorphism from a compact polyhedron to a smooth connected submanifold $M$ of $\R^n$. Then there exist some $\delta>0$ such that every PD map $\tilde{f}:P \To M$ that is $C^1$ $\delta$-close to $f$ is also a PD homeomorphism.
\end{thm}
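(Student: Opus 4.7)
The plan is to verify the two defining properties of a PD homeomorphism for any PD map $\tilde f$ that is sufficiently $C^1$-close to $f$: (a) $\tilde f$ is a smooth embedding on each simplex of a common triangulation, and (b) $\tilde f$ is a homeomorphism $P \To M$. Fix a triangulation $K$ of $P$ that simultaneously realises the PD structure of $f$ and witnesses the $C^1$-closeness (take a common subdivision if necessary); by compactness of $P$ it is finite.

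For (a), on each simplex $\sigma \in K$ the derivative $(df_{|\sigma})_p$ has full rank at every point, and this open condition persists uniformly for maps $C^1$-close to $f_{|\sigma}$ on the compact simplex $\sigma$. Hence for sufficiently small $\delta$, each $\tilde f_{|\sigma}$ is a smooth immersion; a direct application of Munkres' Theorem 8.4 then furnishes a threshold $\delta_\sigma$ below which $\tilde f_{|\sigma}$ is in fact a smooth embedding. Taking the minimum over the finite family of simplices delivers the required bound $\delta_1 > 0$.

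For injectivity in (b), use that $f : P \To M$ is a homeomorphism of compact Hausdorff spaces, so $f^{-1}$ is uniformly continuous. Let $\varepsilon > 0$ be a Lebesgue number for the cover of $P$ by open stars of vertices of $K$, and choose $\delta_2$ so small that $|\tilde f(x) - \tilde f(y)| < 2\delta_2$ forces $|x-y| < \varepsilon$. Then any coincidence $\tilde f(x) = \tilde f(y)$ places $x$ and $y$ in a common open star, where the per-simplex smooth embedding from (a) combined with $C^1$-closeness to the embedding $f$ forces $x = y$; this is the mechanism underlying Munkres' Theorem 8.8. For surjectivity, (a) together with injectivity makes $\tilde f$ a continuous injection between the topological $n$-manifolds $P \cong M$ (note that $P$ is a manifold because $f$ is a homeomorphism, and the top-dimensional simplices of $K$ have dimension $n = \dim M$), so by invariance of domain $\tilde f(P)$ is open in $M$; it is also closed by compactness of $P$, hence equal to $M$ by connectedness of $M$. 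The resulting continuous bijection from compact $P$ to Hausdorff $M$ is automatically a homeomorphism.

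Setting $\delta = \min(\delta_1, \delta_2)$ completes the argument. The main obstacle is bridging the per-simplex embedding property of (a) with the global injectivity of (b); the Lebesgue-number argument on the open star cover, exploiting uniform continuity of $f^{-1}$, supplies exactly this bridge and is the essence of Munkres' Theorem 8.8 cited by the author.
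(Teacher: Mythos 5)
Your argument is correct and follows essentially the same route as the paper, which simply observes that the statement ``follows immediately from'' Munkres' Theorems 8.4 and 8.8: you unpack those same two citations (per-simplex non-degeneracy/embedding persisting under $C^1$-perturbation, and global injectivity via uniform continuity of $f^{-1}$ plus the open-star Lebesgue-number argument). The only genuinely added content is the surjectivity step via invariance of domain, compactness and connectedness of $M$, which is needed because Munkres' conclusion is ``imbedding'' (homeomorphism onto the image) while the paper's notion of PD homeomorphism requires the image to be all of $M$; that addition is sound.
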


In order to approximate PD maps by PL maps we need the following concept (cf. \cite[p.~90]{MR0198479}).

\begin{dfn}
Let $\tilde{K}$ be a subdivision of  $K$ and let $f:K \To \R^n$ be a PD map. The \emph{secant map} $L_{\tilde{K}} f:K \To \R^n$ is defined to be the map that is linear on the simplices of $\tilde{K}$ and coincides with $f$ on the vertices of $\tilde{K}$.
\end{dfn}

By definition $L_{\tilde{K}} f$ is a PL map. For a finite simplicial complex $K$ on which a finite group $G$ acts simplicially, we would like to find $G$-subdivisions $\tilde{K}$ (i.e. subdivisions on which $G$ acts simplicially) such that $L_{\tilde{K}} f$ becomes close to $f$ in the $C^1$ sense. According to \cite[Lem.~9.3, p.~90]{MR0198479}, it is sufficient to find $G$-subdivisions $\tilde{K}$ of $K$ whose simplices' diameters tends to zero while their thickness stays bounded from below. The thickness of a simplex is defined to be the ratio of the minimal distance of its barycenter to its boundary and its diameter. The proof in \cite[Lem.~9.4, p.~92]{MR0198479} of the non-equivariant version of this statement also works in the equivariant case (cf. \cite[Lem.~70]{Lange}), i.e. we have 

\begin{lem}\label{lem:fine_subdivisions} Let $K$ be a finite simplicial complex on which a finite group $G$ acts simplicially. There is a $t_0>0$ such that $K$ has arbitrarily fine $G$-subdivisions for which the minimal simplex thickness is at least $t_0$.
\end{lem}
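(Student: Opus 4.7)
The plan is to follow the non-equivariant argument in \cite[Lem.~9.4, p.~92]{MR0198479} line by line and verify that the subdivisions Munkres constructs can be produced $G$-equivariantly at no extra cost. In his proof Munkres obtains arbitrarily fine subdivisions of $K$ with a uniform lower bound on thickness by iterating a subdivision scheme that is defined simplex-by-simplex from the intrinsic affine data of each simplex (barycentres, midpoints of faces, incidence data), and his thickness and mesh estimates are local on a single simplex and depend only on $\dim K$ and on the initial thickness.

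My first step would be to apply Munkres's prescribed subdivision simultaneously to every simplex of $K$. Because the prescription depends only on affine-intrinsic data, the restriction $g|_\sigma\colon\sigma\to g(\sigma)$ of any $g\in G$ carries the chosen subdivision of $\sigma$ to that of $g(\sigma)$; in particular the stabiliser $G_\sigma$ acts by linear automorphisms of the subdivision of $\sigma$, and the local subdivisions agree on shared faces by the same canonicity. The resulting global complex $\tilde{K}$ is then a $G$-subdivision of $K$ by construction, and iterating the procedure yields $G$-subdivisions of arbitrarily small mesh while preserving the uniform thickness bound $t_0$ given by Munkres's estimate.

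The hard part---and really the only genuine obstacle---will be to phrase Munkres's recipe in this intrinsic form. Whenever his construction appeals to an ordering of the vertices of a simplex, or to a generic choice of stellar centre, I would replace the offending step by a canonical substitute (typically the barycentre of the relevant cell), which is automatically fixed by the full symmetric group on its vertices and hence by $G_\sigma$. This may worsen the universal constant $t_0$, but does not affect the mesh estimate or the equivariance. Such a reformulation is written out in detail in \cite[Lem.~70]{Lange}, which is what the statement invokes.
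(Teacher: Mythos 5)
This is exactly the paper's approach: the paper proves Lemma \ref{lem:fine_subdivisions} by the one-line assertion that Munkres's proof of the non-equivariant statement carries over to the equivariant setting, deferring the details to \cite[Lem.~70]{Lange}, which is precisely the plan you describe. One small correction to your last paragraph: the genuinely non-canonical ingredient in Munkres's scheme is the vertex ordering governing the standard (thickness-preserving) subdivision, and a barycentre is not the right substitute there --- the equivariant fix is to pass to $K^{(1)}$, where grading the vertices by the dimension of their supporting simplex in $K$ gives a canonical, $G$-invariant total order on the vertices of every simplex.
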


As in \cite[Thm.~9.6, p.~94]{MR0198479} we immediately obtain
\begin{thm}\label{thm:PD_approx}
Let $K$ be a finite simplicial complex on which a finite group $G$ acts simplicially and let $f:K \To \R^n$ be a PD map. Then for every $\delta > 0$ there exists a $G$-subdivision $\tilde{K}$ of $K$ such that the secant map $L_{\tilde{K}} f$ is $C^1$ $\delta$-close to $f$.
\end{thm}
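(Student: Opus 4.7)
The plan is to follow the non-equivariant secant approximation argument of Munkres (Thm.~9.6), plugging in equivariant ingredients at each step. All the essential work has already been done in Lemma~\ref{lem:fine_subdivisions}; once $G$-equivariant fine and thick subdivisions are available, the required estimate is local on each subsimplex and proceeds just as in the non-equivariant case.

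First I would reduce to the case in which $f$ is smooth on each simplex of $K$ itself. Since $f$ is PD, there exists some triangulation $K_0$ of $|K|$ on which $f$ restricts smoothly to each simplex. Forming the cell complex $\bigcap_{g\in G}g(K\cap K_0)$ and applying Lemma~\ref{lem:subdivision_cell_simplex} (exactly as in the proof of Proposition~\ref{prp:triangulation}) produces a $G$-invariant simplicial refinement $K^*$ of $K$ each of whose simplices is contained in a simplex of $K_0$; in particular $f|_\tau$ is smooth for every $\tau\in K^*$. Any $G$-subdivision of $K^*$ is also a $G$-subdivision of $K$, so I may replace $K$ by $K^*$ and assume henceforth that $f|_\sigma$ is smooth for every $\sigma\in K$.

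Since $K$ is finite and each $f|_\sigma$ is smooth on a compact set, there is a uniform upper bound $M$ on the operator norms of the second derivatives of $f|_\sigma$, ranging over all $\sigma\in K$. By Lemma~\ref{lem:fine_subdivisions} there is a thickness constant $t_0>0$ and, for every $\eta>0$, a $G$-subdivision $\tilde K$ of $K$ with maximal simplex diameter at most $\eta$ and minimal simplex thickness at least $t_0$. For each $\tilde\sigma\in\tilde K$, lying in some $\sigma\in K$, the restriction $L_{\tilde K}f|_{\tilde\sigma}$ is the unique affine map agreeing with $f$ at the vertices of $\tilde\sigma$. The standard Taylor estimate (Munkres, Lem.~9.3) bounds both $|f-L_{\tilde K}f|$ and the operator norm of its derivative on $\tilde\sigma$ by a constant $C=C(M,n,t_0)$ times $\eta$.

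Taking the triangulation $\tilde K$ itself as witness in the definition of $C^1$ $\delta$-closeness (on each $\tilde\sigma$ both $f$ and $L_{\tilde K}f$ are smooth), it then suffices to pick $\eta<\delta/C$. The only genuinely equivariant step, namely producing arbitrarily fine $G$-invariant subdivisions whose simplices are not pinched to zero thickness, has been dealt with in Lemma~\ref{lem:fine_subdivisions}; the remainder of the argument is local and simplex-by-simplex, so no new obstacle is posed by the presence of the $G$-action.
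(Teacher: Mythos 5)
Your proposal is correct and follows exactly the route the paper takes: the paper derives the theorem ``as in Munkres, Thm.~9.6'' from Lemma~\ref{lem:fine_subdivisions}, i.e.\ arbitrarily fine $G$-subdivisions of uniformly bounded thickness combined with the standard secant-map Taylor estimate. Your preliminary reduction to the case where $f$ is smooth on the simplices of $K$ itself (via a $G$-invariant common refinement as in the proof of Proposition~\ref{prp:triangulation}) is a detail the paper leaves implicit, and you handle it correctly.
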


\subsection{Linearizing smooth actions of finite groups on spheres}\label{sub:linearization} Using \emph{Ricci flow} techniques Dinkelbach and Leeb showed that any smooth action of a finite group $G$ on $S^3$ is smoothly conjugate to an orthogonal action \cite{MR2491658}. The same statement is true for smooth actions of finite groups on $S^2$, but in this case it follows more elementary by the \emph{geometrization of spherical 2-orbifolds} \cite{MR2883685,MR2954692} or by the \emph{uniformization theorem}: Average an arbitrary Riemannian metric on $S^2$ to obtain a $G$-invariant Riemannian metric $g$ on $S^2$. The metric $g$ determines a complex structure on $S^2$ (cf. \cite{MR0074856}) with respect to which $G$ acts biholomorphically. By the uniformisation theorem there exists a biholomorphism to the Riemann sphere (cf. e.g. \cite[Thm.~27.9]{MR0648106}) and thus a smooth function $\phi$ on $S^2$ such that $g_1=e^{\phi}g$ has constant sectional curvature $1$. This function satisfies the equation
\[
 2\Delta_g \phi + \mathrm{S}(g)=\mathrm{S}(g_1) e^{2\phi}
\]
where $\mathrm{S}(g)$ is the curvature of $g$, $\mathrm{S}(g_1)=1$ and $\Delta_g$ denotes the Laplace operator attached to $g$ (cf. \cite[II.3, p.~726]{MR882712}). Hence $\phi$ is unique by the maximum principle. Because of $\mathrm{S}(e^{(\phi\circ h)}g)=\mathrm{S}(h^*g_1)=\mathrm{S}(g_1)\circ h$ for each $h\in G$ the metric $e^{(\phi\circ h)}g$ has constant sectional curvature $1$ on $S^2$ as well for each $h\in G$. This implies that $\phi$ is $G$-invariant by the uniqueness statement above. Hence, $G$ acts isometrically with respect to $g_1$ and its action on $S^2$ can thus be smoothly conjugated to an orthogonal action on the standard unit sphere $S^2 \subset \R^3$.

\subsection{Gluing smoothed PL manifolds} \label{sub:gluing}
Let $P_1$ and $P_2$ be two PL manifolds with boundary endowed with smoothings. Suppose there exists a piecewise linear diffeomorphism $f:\partial P_1 \To \partial P_2$. Then there exists a smooth structure on $P_1 \cup_f P_2$ with respect to which $P_1$ and $P_2$ are smoothly embedded (cf. \cite[Thm.~4.1, p.~25; Remark, p.~24]{MR0190942}). Moreover, using the methods described in Section \ref{sub:pl_structure} one can find triangulations of $P_1$ and $P_2$ whose simplices are smoothly embedded and with respect to which the map $f$ is a simplicial isomorphism. Such triangulations give rise to a triangulation of $P_1 \cup_f P_2$ whose simplices are smoothly embedded. Hence, the smooth structure on $P_1 \cup_f P_2$ above in fact defines a smoothing.

\section{Proof of the main result}\label{sec:main_result}
Let $K$ be a simplicial complex that is also a PL manifold of dimension $n\leq 4$ and let $G$ be the group of all simplicial isomorphisms of $K$. We are going to show that the complex $K$ admits a $G$-equivariant smooth structure and a subdivision all of whose simplices are smoothly embedded. In view of Proposition \ref{prp:triangulation} on the existence of equivariant triangulations, this will imply our main result. Perhaps after taking the first barycentric subdivision of $K$ we can assume that a simplex $\sigma$ of $K$ invariant under some $g \in G$ is pointwise fixed by $g$.

We endow $K$ with an auxiliary polyhedral metric such that all edges have unit length and such that all simplices of $K$ are flat. For $n=1$ we can isometrically identify the resulting metric space with a distance circle in $\R^2$ or a real line to obtain a desired equivariant smoothing. For $n\geq2$ this strategy does not work. We can put a smooth structure on the complement $K^*$ of the $(n-2)$-skeleton of $K$ in $K$ such that every isometry between a subset of $K^*$ and a subset of $\R^n$ is smooth. However, in general it is not possible to extend it to a \emph{compatible} smooth structure on $K$.

In order to extend the smoothing we have to change the smooth structure on $K^*$ in a small neighbourhood of the $(n-2)$-skeleton. Let us begin with the simplest case.

\subsection{Proof for $2$-manifolds} \label{sub:2-manifold}

The canonical metric on $K$ introduced above is induced by a canonical piecewise flat Riemannian metric. Suppose a vertex $x$ of $K$ is contained in $n$ $2$-simplices of $K$. Then we can embed $\mathrm{star}_K(x)$ as a regular $n$-gon of radius $1$ into $\R^2$. Using the cone parameter of $\mathrm{star}_K(x)=\overline{C} \mathrm{lk}_K(x)$ and a smooth cut-off function, in a neighbourhood of $x$ in $K$  we can interpolate between the Riemannian metric induced from the embedding above and the canonical piecewise Riemannian metric on $K$. Doing this for all vertices, we obtain a new equivariant piecewise Riemannian metric on $K$ that coincides with the canonical piecewise Riemannian metric away from the vertices. Close to the vertices and away from the edges the metric defines an equivariant smoothing. Along the interior of the edges we can use the exponential map to define collars. These collars in turn define charts that extend the equivariant smoothing to all of $K$. In view of our proof in higher dimensions note that the same method works, if we start with a piecewise flat metric on $K$ distinct from the canonical one.

What we did is to first construct an equivariant \emph{welding} of $K$ via the metric, i.e. an equivariant and continuous choice of linearisations of the tangent spaces of $K$ (cf. \cite[Def.~3.10.4]{MR1435975}), and then an equivariant smoothing from it, thereby adopting the proof from the non-equivariant case (cf. \cite[Prop.~3.10.7]{MR1435975}). In the non-equivariant case this approach also works in dimension three \cite[Prop.~3.10.7; Thm.~3.10.8]{MR1435975}. In the equivariant case one could use the result in \cite{MR0296808}, which yields a realisation of a simplicial $2$-sphere in $\R^3$ such that all simplicial isomorphisms are realised by isometries, to first construct an equivariant welding and then an equivariant smoothing from it. However, this approach does not easily generalise to dimension four, where additional issues arise when trying to extend the welding from the vertices over the $1$-skeleton (cf. \cite[Challenge.~3.10.17]{MR1435975}) and where there is no analogue of \cite{MR0296808} (cf. \cite{MR0296808}). Instead, we follow a suggestion by Thurston (cf. Introduction and \cite[Challenge~3.10.20]{MR1435975}) in a way that generalises to the four dimensional case. In short, we start with the canonical smooth structure on the $(n-2)$-skeleton of $K$ and successively extend it to smoothings on the complements of neighbourhoods of the $i$-skeleton, $i=n-3,\ldots,0$, and finally to all of $K$. The main ingredients for our proof are provided in the next section.

\subsection{Radially extending equivariant smoothings}\label{sub:rad_ext_smo} 

We will need the following two lemmas to extend equivariant smoothings. The first lemma just formulates the uniqueness of smoothings and the linearisability of finite group actions on spheres in dimensions $n\leq 3$ in a suitable manner (cf. Section \ref{sub:linearization} and Section \ref{sec:smoothing_theory}).

\begin{lem}\label{lem:rad_linearization} Let $K$ be a triangulated PL $n$-sphere, $n\leq3$, on which a finite group $G$ acts simplicially. Suppose that $K$ is equipped with a smoothing with respect to which $G$ acts smoothly. Then there exists a group homomorphism $r : G \To O(n+1)$ and a $G$-equivariant PD homeomorphism $f:K \To S^n\subset \R^{n+1}$.
\end{lem}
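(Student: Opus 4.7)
The plan is to assemble the lemma from the two ingredients recalled in Sections~\ref{sec:smoothing_theory} and~\ref{sub:linearization}: uniqueness of smoothings on topological $n$-spheres for $n\leq 3$, and smooth linearisability of finite group actions on $S^n$ for $n\leq 3$. Neither ingredient involves the group in a subtle way, so my strategy is to first produce a (non-equivariant) diffeomorphism $K\To S^n$, then transport the $G$-action, then linearise.

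First I would fix a diffeomorphism $\phi\colon K\To S^n$, where the target carries its standard smooth structure. Concretely, choose any PL homeomorphism $K\To\partial\Delta^{n+1}\cong S^n$ and pull the standard smoothing back to $K$; the uniqueness of smoothings of a PL $n$-sphere in dimensions $n\leq 3$ produces a diffeomorphism between this pullback smoothing and the given smoothing on $K$. Composing yields $\phi$. Since the smoothing of $K$ is compatible with its PL structure, there is a triangulation of $K$ whose simplices are smoothly embedded; on each such simplex $\phi$ is a smooth map between smooth manifolds, so $\phi$ is a PD homeomorphism in the sense of Section~\ref{sec:smoothing_theory}.

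Next I would push the smooth $G$-action on $K$ forward along $\phi$ to obtain a smooth action of $G$ on $S^n$. By the linearisation results of Section~\ref{sub:linearization} (the uniformisation argument for $n\leq 2$ and the Ricci-flow result \cite{MR2491658} for $n=3$), there exist a diffeomorphism $\psi\colon S^n\To S^n$ and a homomorphism $r\colon G\To O(n+1)$ such that $\psi$ intertwines the pushed-forward action with the orthogonal action defined by $r$. Set $f=\psi\circ\phi$. For every $g\in G$ and $x\in K$,
\[
  f(g\cdot x)=\psi(\phi(g\cdot x))=\psi\bigl((\phi\circ g\circ\phi^{-1})(\phi(x))\bigr)=r(g)\,\psi(\phi(x))=r(g)\cdot f(x),
\]
so $f$ is $G$-equivariant.

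Finally, $f$ is a PD homeomorphism: on each simplex of the smoothly embedded triangulation of $K$, it restricts to a smooth map with injective differential, being the composition of the PD homeomorphism $\phi$ with the smooth diffeomorphism $\psi$. The argument is essentially a packaging of the two cited inputs; I do not expect a substantive obstacle. The only point that requires care is ensuring that the auxiliary map $\phi$ is genuinely PD (not merely a smooth diffeomorphism of the two smooth structures), which is what the existence of a triangulation with smoothly embedded simplices — the defining property of a compatible smoothing — directly provides.
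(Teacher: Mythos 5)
Your proposal is correct and follows exactly the route the paper intends: the paper gives no separate proof of this lemma, stating only that it ``formulates the uniqueness of smoothings and the linearisability of finite group actions on spheres in dimensions $n\leq 3$ in a suitable manner,'' and your argument is a correct assembly of precisely those two ingredients. The one point needing care --- that $\phi$ is PD because compatibility of the smoothing supplies a triangulation with smoothly embedded simplices, which post-composition with a diffeomorphism preserves --- is handled properly.
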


The second lemma enables us to extend equivariant smoothings.

\begin{lem}\label{lem:rad_extension} Let $K$ be a triangulated PL $n$-sphere on which a finite group $G$ acts simplicially. Suppose there exists a group homomorphism $r : G \To O(n+1)$ and a $G$-equivariant PD homeomorphism $f:K \To S^n\subset \R^{n+1}$. Then this PD homeomorphism can be extended to a $G$-equivariant PD homeomorphism $\overline{F}: \overline{C} K \To B^{n+1}$ from the closed cone of $K$ to the unit $(n+1)$-ball in $\R^{n+1}$.
\end{lem}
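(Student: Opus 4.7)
The plan is to use the radial extension
\[
\overline{F}(x,t) := t\cdot f(x),\qquad (x,t)\in \overline{C}K,
\]
and to verify that it is a $G$-equivariant PD homeomorphism onto $B^{n+1}$. The soft properties come almost for free. The formula respects the cone identification $(x,0)\sim(y,0)$; it has continuous inverse $y\mapsto (f^{-1}(y/\|y\|),\|y\|)$ for $y\neq 0$ (extended by sending $0$ to the cone apex), so it is a homeomorphism of compact spaces; it restricts to $f$ at $t=1$; and since $r(g)$ is linear and fixes the origin, $\overline{F}(gx,t)=tf(gx)=t\,r(g)f(x)=r(g)\overline{F}(x,t)$, so $\overline{F}$ is $G$-equivariant. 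What demands work is exhibiting a triangulation in which every simplex is smoothly embedded by $\overline{F}$.

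To that end I would first replace $K$ by a $G$-subdivision on which $f$ is smooth on every simplex, and then apply Lemma \ref{lem:fine_subdivisions} to pass to a further $G$-subdivision $\tilde K$ of arbitrarily small diameter with uniform thickness bound $t_0>0$. Equip $\overline{C}K$ with the induced cone triangulation $\overline{C}\tilde K$, whose simplices are either simplices of $\tilde K$ or the join of such with the cone apex $c$. Smoothness of $\overline{F}$ on each simplex is automatic because $tf(x)$ is smooth whenever $f|_\sigma$ is. Away from $c$ the differential is injective by a direct tangent-space computation: on a cone simplex $\overline{C}\sigma$ at $(x,t)$ with $t>0$, the differential sends a direction $v\in T_x\sigma$ to $t\,df_x(v)\in T_{f(x)}S^n$ and the cone direction $\partial_t$ to $f(x)$, and $f(x)\perp T_{f(x)}S^n$ forces these to be independent; on the boundary simplices of $\tilde K$ there is nothing to check because $\overline{F}$ restricts to $f$.

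The hard part is the cone apex. Along the edge of $\overline{C}\sigma$ from $c$ to a vertex $v_i$ of $\sigma$, $\overline{F}$ reads $s\mapsto sf(v_i)$, so the differential at $c$ collapses to the requirement that the vertex images $\{f(v_0),\ldots,f(v_k)\}$ be linearly independent in $\R^{n+1}$, equivalently that $0$ miss their affine hull. This is not automatic, but I expect it to follow from fineness as follows: a Taylor expansion of $f$ at $v_0$ places the $f(v_i)$ within $O(\epsilon^2)$ of the affine tangent plane $f(v_0)+T_{f(v_0)}S^n$, and every point of this plane has norm at least $\|f(v_0)\|=1$ because $T_{f(v_0)}S^n=f(v_0)^\perp$. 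With the thickness bound $t_0$ and compactness of $K$ controlling the implicit constants uniformly, shrinking the diameter $\epsilon$ simultaneously pushes $0$ out of the affine hull on every simplex of $\tilde K$, yielding the required linear independence. Making this uniform bookkeeping precise is the only technical point I expect to require real care; once it is in hand, the map $\overline{F}$ is a $G$-equivariant PD homeomorphism extending $f$, as desired.
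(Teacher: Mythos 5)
Your soft verifications (equivariance, continuity, invertibility, restriction to $f$ at $t=1$) are fine, but the core of your plan --- taking $\overline{F}(x,t)=t\,f(x)$ and checking nondegeneracy at the apex via linear independence of the vertex images --- does not work, and this is exactly the trap the paper's proof warns against in its first sentence: one cannot simply extend $f$ radially to the origin. On a cone simplex $c*\sigma$, writing $p$ for the Euclidean coordinate centred at the apex, the map $\overline{F}$ is positively homogeneous of degree one, $\overline{F}(\lambda p)=\lambda\,\overline{F}(p)$. A degree-one homogeneous map is differentiable at the apex only if it is linear there, i.e.\ only if $f|_\sigma$ is the restriction of a linear map: the directional derivative at $c$ in the direction of $x\in\sigma$ equals $f(x)$, which is not linear in $x$. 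Since $f$ maps $K$ onto the round sphere, $f$ can never be affine on a nondegenerate $n$-simplex (a flat simplex of positive dimension does not lie in $S^n$), so $\overline{F}$ fails to be even $C^1$ at the cone point on every top-dimensional cone simplex, no matter how fine the subdivision and no matter how the cone is retriangulated. Your criterion ``$\{f(v_0),\dots,f(v_k)\}$ linearly independent'' is the right condition for the radial extension of the \emph{secant map} $L_{\tilde K}f$, which is linear on simplices of $\tilde K$; it is not the relevant condition for $t\,f(x)$ itself.

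The missing idea --- and the substance of the paper's proof --- is to deform $f$ in the radial variable before reaching the apex. On $t\in(1/2,1]$ one isotopes $f$ to $\tilde f=p\circ L_{\tilde K}f$, the secant map of a fine $G$-subdivision reprojected to the sphere, using Theorem \ref{thm:PD_approx} to make $\tilde f$ $C^1$-close to $f$ and Theorem \ref{thm:PD_approx_close} to guarantee each intermediate slice is still a PD embedding; on $t\in[0,1/2]$ one interpolates further from the spherical simplices down to the flat secant map, so that near the apex $\overline{F}$ agrees with $t\,L_{\tilde K}f(x)$. This \emph{is} linear on each simplex of $\tilde K$, hence extends to a nondegenerate linear embedding of each cone simplex; at that point your Taylor/affine-hull estimate (that for a fine, uniformly thick subdivision the origin misses the affine hull of the vertex images) is exactly the right tool. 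Without this intermediate deformation the extension you propose is not a PD homeomorphism.
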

\begin{proof}
\begin{figure}
	\centering
		\includegraphics[width=0.2\textwidth]{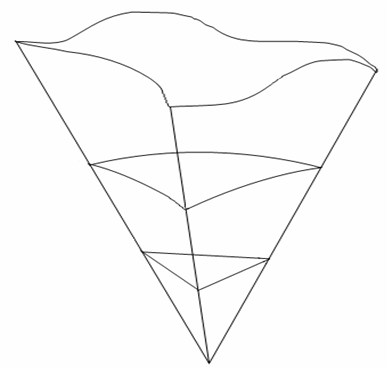}
	\caption{Image of a simplex of the triangulation of $K$ under the map $\overline{F}:CK \To \R^{n+1}$ (cf. Lemma \ref{lem:rad_extension})}
	\label{fig:isotopy}
\end{figure}
We cannot simply extend the map $f$ linearly to the origin, because then the restriction to a simplex could be degenerate at the cone point. However, we can isotopy $f$ along the radial direction to a map that can be linearly extended to the cone point in a compatible way. We do this in two steps. In the first step we isotopy $f$ such that the embedded simplices of $K$ become spherical simplices. The second isotopy deforms the spherical simplices into flat simplices (cf. Figure \ref{fig:isotopy}). More precisely, let $p: \R^{n+1}\backslash\{0\} \To S^n$ be the radial projection and let $\theta: \R \To \R$ be a smooth cut-off function with $0\leq\theta(t)\leq1$, $\theta'(t)\leq 0$, $\theta(t)=1$ for $t<1/3$ and $\theta(t)=0$ for $2/3<t$. According to Theorem \ref{thm:PD_approx} there is a $G$-equivariant subdivision $\tilde{K}$ of the complex $K$ such that the map $L_{\tilde{K}}f:K \To \R^{n+1}$ is close to $f$ in the $C^1$-sense and hence, the same is true for the map $\tilde{f}=p\circ L_{\tilde{K}}f:K \To S^n$. For a sufficiently good approximation the map
\[
	\begin{array}{cccl}
		  F_t : & K 				& \To  	& S^{n} \\
		        & x	& \MTo  & \frac{\theta_1(t)\tilde{f}(x)+(1-\theta_1(t))f(x)}{\left\| \theta_1(t)\tilde{f}(x)+(1-\theta_1(t))f(x) \right\|}
	\end{array}
\]
with $\theta_1(t)=\theta(2t-1)$ is well-defined for $t>1/2$. Moreover, if we choose a sequence of subdivisions such that $\tilde{f}$ converges to $f$ in the $C^1$-sense, then $F_t$ converges uniformly in $t$ to $f$ in the $C^1$-sense. Therefore, for a sufficiently fine subdivision the map
\[
	\begin{array}{cccl}
		  F : & K \times (1/2,1]				& \To  	& \R^{n+1} \\
		         & (x,t) 	& \MTo  & t \cdot F_t(x)
	\end{array}
\]
defines a $G$-equivariant PD embedding by Theorem \ref{thm:PD_approx_close} applied to the maps $F_t$. With $\theta_2(t)=\theta(2t)$ we set
\[
	\begin{array}{cccl}
		  \mu : & K \times [0,1/2]				& \To  	& \R \\
		         & (x,t) 	& \MTo  & \theta_2(t) \frac{1}{\left\| \tilde{f}(x) \right\|} + (1-\theta_2(t) )
	\end{array}
\]
and define
\[
	\begin{array}{cccl}
		  F : & K \times [0,1/2]				& \To  	& \R^{n+1} \\
		         & (x,t) 	& \MTo  & t\mu(x,t)\tilde{f}(x).
	\end{array}
\]
Then the map $F: K \times [0,1] \To B^{n+1}$ descends to a $G$-equivariant PD homeomorphism $\overline{F}: \overline{C}K \To \R^{n+1}$.
\end{proof}

\subsection{Product neighbourhoods} \label{sub:technical}
Before continuing the actual proof, we introduce some organising notations. We denote the set of vertices of $K^{(1)}$, that is of the first barycentric subdivision of $K$, whose supporting simplex in $K$ has dimension $i$ by $v'_i(K)$. In particular, we denote the set of vertices of $K$ by $v(K)=v_0'(K)$. We set $v'(K)=\bigcup_{i=0,\ldots,n} v'_i(K)$ where $n=\mathrm{dim}(K)$. Each $x \in v'(K)$ has an open neighbourhood $U_x\subset \mathrm{star}_{K^{(1)}}(x)$ that splits isometrically as a product $V_x \times S_x$ of connected open sets $V_x \subset \mathrm{supp}_K(x)$ and $S_x \subset \mathrm{supp}_K(x)^{\bot_x}$. Here $\mathrm{supp}_K(x)^{\bot_x}$ is the set of points $y \in \mathrm{star}_K(\mathrm{supp}_K(x))$ for which the straight line between $x$ and $y$ meets $\mathrm{supp}_K(x)$ orthogonally. Note that open sets $U_{x}\subset \mathrm{star}_{K^{(1)}}(x)$ and $U_{y} \subset \mathrm{star}_{K^{(1)}}(y)$ are disjoint for distinct $x,y \in v'_i(K)$.

\begin{dfn}\label{dfn:unifrom-cover} A neighbourhood $U_x\subset \mathrm{star}_{K^{(1)}}(x)$ of $x\in v'(K)$ as above is called a \emph{product neighbourhood}. We call it a \emph{symmetric product neighbourhood} if $U_x$ is in addition invariant under all simplicial isomorphisms of $\mathrm{star}_K(\mathrm{supp}_K(x))$ that leave $\mathrm{supp}_K(x)$ invariant. An open cover $\mathcal{U}=\{U_{x}\}_{x\in v'(K)}$ of $K$ consisting of symmetric product neighbourhoods $U_{x}$ of $x\in v'(K)$ is called a \emph{symmetric product cover}, if for all $i=0,\ldots,n$ and all $x,y \in v'_i(K)$ any simplicial isomorphism between $\mathrm{star}_K(\mathrm{supp}_K(x))$ and $\mathrm{star}_K(\mathrm{supp}_K(x))$ that maps $\mathrm{supp}_K(x)$ onto $\mathrm{supp}_K(x)$, maps $U_x$ onto $U_y$.
\end{dfn}

Note that a symmetric product cover of $K$ is in particular invariant under all simplicial isomorphisms of $K$. In order to have control on the sizes of product neighbourhoods of a symmetric product cover $\mathcal{U}$ we introduce its fineness $\mathrm{fin}(\mathcal{U})$ defined as
\[
		\mathrm{fin}(\mathcal{U}):= \max_{U_x=V_x\times S_x \in \mathcal{U}} \mathrm{inf}\{r>0|S_x\subset B_r(x)\}
\]
and its cofineness $\mathrm{cofin}(\mathcal{U})$ defined as
\[
		\mathrm{cofin}(\mathcal{U}):= \max_{U_x=V_x\times S_x \in \mathcal{U}} \mathrm{inf}\{r>0|V_x\subset B_r(x)\}.
\]
A symmetric product cover with small fineness has large cofiness and vice versa. Clearly, symmetric product covers with arbitrarily small (co)fineness exist.

\subsection{Proof for $3$-manifolds} \label{sub:3manifolds} Let $\mathcal{U}$ be a symmetric product cover of $K$ with small fineness. For a point $x \in v'_1(K)$ on an edge of $K$ we set $S_x^*=S_x \backslash \{x\}$ where $U_x=V_x \times S_x \in \mathcal{U}$ as in the preceding section. The set $V_x\times S^*_x$ inherits a smoothing from $K^*$ that respects the product structure and is invariant under all isometries in $G$ that fix $\mathrm{supp}_{K}(x)$ pointwise. As in the $2$-manifold case we obtain a smoothing of $S_x$ invariant under these isometries that differs from the smoothing of $S^*_x$ only in a small neighbourhood of $x$. Working with representatives of $G$-orbits in $v'_1(K)$ we obtain a $G$-equivariant smoothing of $K_1^{*}=K \backslash \overline{N_{\varepsilon_1}(K_{(0)})}$, the complement of small closed balls around the vertices of $K$. With $\mathrm{fin}(\mathcal{U})$ tending to zero, we can choose $\varepsilon_1$ arbitrarily small. By construction, intersections of simplices of $K$ with $K_1^{*}$ are smoothly embedded.
\begin{figure}
	\centering
		\includegraphics[width=0.5\textwidth]{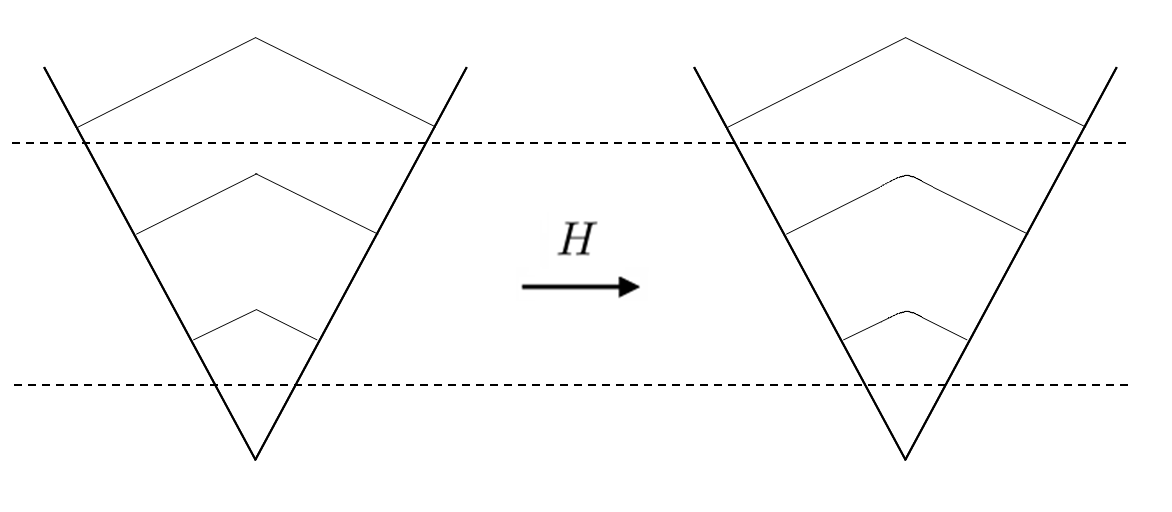}
	\caption{Two-dimensional sketch of the map $H$ restricted to a simplex of $K$. Above the lower dotted line the smooth structure of $K_1^{*}$ is defined (cf. proof). Above the upper dotted line the map $H$ is the identity.}
	\label{fig:new_h}
\end{figure}

For a vertex $x \in v(K)$ of $K$ let $B_x= \{y \in K | d(x,y)=\mathrm{min}_{z \in v(K)} d(y,z)\}$ be a Voronoi domain about $x$. It is a polyhedral $3$-ball in $\mathrm{star}_{K}(x)$ invariant under all simplicial isomorphisms of $\mathrm{star}_{K}(x)$. Its bundary $P_x=\partial B_x$ projects homeomorphically to $\mathrm{lk}_{K}(x)$ with respect to the radial projections in $\mathrm{star}_{K}(x)$. In the present situation, in which all edges of $K$ have unit length, we simply have $B_x= \mathrm{star}_{K^{(1)}}(x)$ and $P_x= \mathrm{lk}_{K^{(1)}}(x)$. We identify $\mathrm{star}_{K}(x)$ with a subset of the cone $CP_x$ and work with cone coordinates $t \cdot v:=(t,v) \in \R_{\geq 0} \times P_x$ to describe points in $\mathrm{star}_{K}(x)$.

We want to change the $G$-equivariant smoothing of $K_1^{*}$ in neighbourhoods of the vertices of $K$ such that for some small $\lambda$ and each vertex $x$ the polyhedron $\lambda \cdot P_x$ is a smooth submanifold. This would induce an equivariant smoothing of $\lambda \cdot P_x$ that could be extended to an equivariant smoothing of $\lambda \cdot B_x$ using Lemma \ref{lem:rad_linearization} and Lemma \ref{lem:rad_extension}. We could then glue together the smoothed balls $\lambda \cdot B_x$ and their complement in $K_1^{*}$ as explained in Section \ref{sub:gluing} to obtain a smoothing of $K$. Moreover, by working with representatives of $G$-orbits in $v(K)$, we could guarantee that the obtained smoothing is equivariant.

We claim that if the fineness of $\mathcal{U}$ is sufficiently small, then there is some small $\lambda$ with the following property. For each vertex $x$ of $K$ there exists an equivariant PD embedding
\[
	H: N_{2\varepsilon_1}(x)^{C} \cap \mathrm{star}_{K}(x)  \To K_1^{*}
\]
of the form $H(t,v)=(\varphi(t,v),v)$ that differs from the identity only for small $t$ and away from the $1$-skeleton of $K$ such that $H(\lambda \cdot P_x) \subset K_1^{*}$ is a smooth submanifold. Using such PD embeddings as new charts alters the smoothing of $K_1^{*}$ in a desired way so that our strategy above applies.

Close to an edge of $K$, say $\mathrm{supp}_K(x)$, $x \in v'_1(K)$, where $H$ is supposed to be the identity, the condition that  $H(\lambda \cdot P_x) \subset K_1^{*}$ is a smooth submanifold is automatically fulfilled. Indeed, in these regions the polyhedron $t\cdot P_x \subset K_1^{*} \cap \mathrm{star}_{K}(x)$ factors through an $S_x$-slice with respect to the isometric splitting $U_x=V_x \times S_x \in \mathcal{U}$ and is thus a smooth submanifold of $K_1^{*}$.
Away from the $1$-skeleton of $K$ the smooth structure on $K_1^{*}$ is still the canonical smooth structure we started with. With respect to this smooth structure the construction of the map $H$ is a matter of elementary calculus that can be performed simplex-wise (cf. Figure \ref{fig:new_h} for a $2$-dimensional sketch of the construction of the map $H$ for $P= \mathrm{lk}_{K^{(1)}}(x)$ and Section \ref{sub:details_construction_h} for more details on the construction).

Note that due to the application of Lemma \ref{lem:rad_extension} and the gluing procedure, in neighbourhoods of vertices of $K$ (open subsets of) the simplices of $K$ are in general not smoothly embedded, only those of a subdivision. However, by choosing the fineness of the symmetric product cover $\mathcal{U}$ we started with sufficiently small, it can be arranged that these neighbourhoods are small.

\subsection{Proof for $4$-manifolds} \label{sub:sect_4}
The proof in the $4$-dimensional case works along the same lines as in the $3$-dimensional case. More care has to be taken only due to the necessity of introducing simplicial subdivisions in dimension three.  Let $\mathcal{U}$ be a symmetric product cover of $K$ with small fineness. As in the first step in the $3$-dimensional case, from the canonical smoothing of $K^*$ we obtain an equivariant smoothing of $K_1^{*}=K \backslash \overline{N_{\varepsilon_1}(K_{(1)})}$, the complement of a closed $\varepsilon_1$-neighbourhood of the $1$-skeleton of $K$. The only difference is that in the present case a two-dimensional factor $V_x$ splits off from the product neighbourhoods of $U_x=V_x \times S_x$, $x \in v'_2(K)$. With $\mathrm{fin}(\mathcal{U})$ tending to zero we can choose $\varepsilon_1$ arbitrarily small.

Now let $U_x=V_x \times S_x$, $x \in v'_1(x)$, be a product neighborhhood corresponding to an edge of $K$. The smoothing of $K_1^{*}$ restricts to a product subset of $U_x=V_x \times S_x$ and respects the product structure. Treating the second factor $S_x$ as in the $3$-dimensional case and working with representatives of $G$-orbits in $v'_1(K)$ we obtain an equivariant smoothing of $K_2^{*}=K \backslash N_{\varepsilon_2}(K_{(0)})$, the complement of small balls $N_{\varepsilon_2}(K_{(0)})$ around the vertices of $K$. With $\mathrm{fin}(\mathcal{U})$ and $\varepsilon_1$ tending to zero, we can choose $\varepsilon_2$ arbitrarily small. Note that in a neighbourhood of the edges of $K$ only (open subsets of) simplices of a subdivsion of $K$ are smoothly embedded in $K_2^{*}$. However, by choosing the fineness of our initial symmetric product cover $\mathcal{U}$ small, we can assume that this neighborhood is closely concentrated around the edges of $K$.

Finally, we claim that if the fineness of $\mathcal{U}$ is sufficiently small, then the smoothing can be extended to all of $K$, i.e. over neighbourhoods of the vertices of $K$, by the same method as in the three-dimensional case. More precisely, we claim that in this case there is some $\lambda$ such that for each vertex $x \in v(K)$ and $P_x= \mathrm{lk}_{K^{(1)}}(x)$ there exists an equivariant PD embedding
\[
	H: N_{2\varepsilon_2}(x)^{C} \cap \mathrm{star}_{K}(x)  \To K_2^{*}
\]
of the form $H(t,v)=(\varphi(t,v),v)$ that differs from the identity only for small $t$ and away from the $1$-skeleton of $K$ such that $H(\lambda \cdot P_x) \subset K_2^{*}$ is a smooth submanifold. For details on the construction of this map we refer to the next section. Given such a map $H$, the proof can be concluded as in the three-dimensional case.

\subsection{Construction of the map $H$} \label{sub:details_construction_h}

In the preceding two sections we have employed PD embeddings $H$ on three occasions. In this section we describe their construction. We treat the case $n=4$. The case $n=3$ works analogously but more easily. One only has to note that in the case $n=3$ the PD embeddings $H$ applied in Section \ref{sub:3manifolds} and Section \ref{sub:sect_4} need to be constructed with respect to different polyhedral metrics. 

Let $\Delta_4=\Delta^4$ be a standard simplex with unit edge length and let $\Delta_3=\Delta^3$ be a face of $\Delta_4$. We regard $\Delta_4$ as the subset $(\Delta_3 \times [0,1])/\sim$ of the cone $C\Delta_3$ with vertex $x$. Moreover, we suppose that $C\Delta_3$ is isometrically embedded in $\R^4$. Let $\mathcal{\tilde U}$ be a symmetric product cover of $\Delta_3$ of small cofineness (cf. Section \ref{sub:technical}). Let $P$ be the simplicial complex $P= \mathrm{lk}_{\Delta_4^{(1)}}(x)$, which is the boundary of $\{y \in C \Delta_3 | d(0,y)=\mathrm{min}_{z \in v(\Delta_4)} d(y,z)\}$ in $C \Delta_3$. We identify $P$ with $\Delta_3$ via radial projection in $C\Delta_3$. In particular, the cover $\mathcal{\tilde U}$ gives rise to a cover of $P$ that we also denote by $\mathcal{\tilde U}$. To describe points in $C\Delta_3$ we work with cone coordinates $(t,v)\in \R_{\geq 0} \times P$ corresponding to $t \cdot v \in CP=C \Delta_3$. Using a partition of unity it is easy to construct a PD map $\varphi_0 : \Delta_3 \To (0,1]$ such that the following properties hold
\begin{enumerate}
\item $\varphi_0$ is equivariant with respect to all simplicial isomorphisms of $\Delta_3$.
\item the restrictions of $\varphi_0$ to the stars $\mathrm{star}_{\Delta_3^{(1)}}(v)$, $v \in v(\Delta_3)$, are smooth.
\item $\varphi_0 \leq 1$, $\varphi_0$ is approximately constant and $\varphi_0(v)=1$ for $v \in v(\Delta_3)$.
\item for $v \in \tilde U=\tilde V \times \tilde S \in  \mathcal{\tilde U}$, the value of $\varphi_0$ only depends on the $\tilde V$-component of $v$.
\item the subset $P'=\{\varphi_0(p) \cdot p| p \in P\} \subset \Delta_4$ is a smooth submanifold of $\Delta_4$. In particular, by transversality the intersections of $P'$ with the faces of $C\Delta_3$ are submanifolds.
\end{enumerate}
Note that if $P'$ is a smooth submanifold of $\Delta_4$, then so is $\lambda P'$ for each $\lambda \in (0,1]$. Let $\varepsilon_2$ be as in the preceding section and let $\lambda$ be such that $5\varepsilon_2 <\lambda<<1$. Given a function $\varphi_0$ as above, a PD embedding
\[
	h: C\Delta_3 \backslash N_{2\varepsilon_2} \To C \Delta_3
\]
can be constructed as $h(t,v)=(\varphi(t,v),v)$ with $\varphi(t,v)=\theta(t) t + (1-\theta(t)) t \varphi_0(v)$ where $\theta : \R_{\geq0} \To \R_{\geq0}$ is a smooth cut-off function with $0\leq\theta(t)\leq1$, $\theta'(t)\geq 0$, $\theta(t)=1$ for $t>1/10$ and $\theta(t)=0$ for $t< 2\lambda$. In the situation of the preceding section, for a vertex $x$ of $K$ copies of $h$ can be put together to define an embedding
\[
	H: N_{2\varepsilon_2}(x)^{C} \cap \mathrm{star}_{K}(x)  \To K_2^{*}.
\]
We claim that this map has the desired properties if the cofineness of $\mathcal{\tilde U}$ and the fineness of the symmetric product cover $\mathcal{U}$ used in the preceding section are sufficiently small. The cover $\mathcal{\tilde U}$ induces a symmetric product cover of $\mathrm{lk}_{K}(x)$ that we also denote by $\mathcal{\tilde U}$. First observe that $H$ is PD: We can assume that $U_y \subset C\tilde{U}_z$ for all $z\in v(\mathrm{lk}_{K}(x))$ and $y \in v'_1(K)$ lying on the edge $(x,z)$. On $C\tilde{U}_z\cap \mathrm{star}_{K}(x)$, $z\in v(\mathrm{lk}_{K}(x))$, the map $H$ is the identity by $(iii)$ and $(iv)$ and thus trivially PD. Outside of these sets the simplices of $K$ are smoothly embedded by the above assumption $U_y \subset C\tilde{U}_z$ (cf. end of the second paragraph in Section \ref{sub:sect_4}) and thus $H$ is PD in these regions too, since $h$ is PD (with respect to the standard smooth structure on $C\Delta_3$) and satisfies $h(C\tilde{U}_z^C)\subset C\tilde{U}_z^C$. It remains to show that $H(\lambda \cdot P_x) \subset K_2^{*}$ is a smooth submanifold where $P_x=\mathrm{lk}_{K^{(1)}}(x)$. To see this note that for sufficiently small $\mathrm{cofin}(\mathcal{\tilde U})$ and $\mathrm{fin}(\mathcal{U})$ we have $ N_{2\varepsilon_2}(x)^{C} \cap U_y \cap C \tilde U_z= \emptyset$ for all $z \in v'_i(\mathrm{lk}_{K}(x))$, $i=1,2,3$, and all $y \in v'_j(K) \cap P_x$ with $j\leq i$ and thus that the following holds due to our construction of the smooth structure on $K_2^{*}$. A point $p \in H(\lambda \cdot P_x) \cap C \tilde U_y$, $y \in v'(\mathrm{lk}_{K}(x))$, has an open neighbourhood $U$ that splits isometrically $U = V \times S \subset C \tilde U_y$ as an open subset $V$ of $C (\mathrm{supp}_{\mathrm{lk}(x)}(y))$ and an orthogonal submanifold $S \subset C (\mathrm{supp}_{\mathrm{lk}(x)}(y))^{\bot_z}$, for some $z\in C (\mathrm{supp}_{\mathrm{lk}(x)}(y))$, such that the smooth structure on $K_2^{*}$ restricts to the product smooth structure on $U = V \times S$ of the Euclidean smooth structure on $V$ and the smooth structure on $S$. With respect to the splitting $U = V \times S$ a neighbourhood of $p$ in $H(\lambda \cdot P_x)$ splits as a product of a smooth submanifold of $V$ and an open subset of $S$ by properties $(iv)$, $(v)$ and the choice of $P$ and $P_x$. In particular, this neighbourhood is a smooth submanifold of $U$ and thus of $K_2^{*}$. It follows that $H(\lambda \cdot P_x)$ is a smooth submanifold of $K_2^{*}$ as claimed.

\subsection{Higher dimensions} \label{sub:higher_dim}
There exist piecewise linear actions of $\Z_2$ on a $5$-dimensional piecewise linear sphere that cannot be equivariantly smoothed. One way to obtain such an example is as follows (cf. \cite[p.~260]{MR948910}). The group $\Z_2$ admits a piecewise linear action on $S^4$ whose fixed point set is a knotted $S^2$, i.e. the fundamental group of its complement is distinct from $\Z$ \cite[p.~347]{MR0515288}. By suspending this action one obtains a piecewise linear action of $\Z_2$ on $S^5$ with fixed point set $S^3$. However, this action cannot be equivariantly smoothed because its fixed point set $S^3\subset S^5$ is not locally flat.
\newline
\newline
\emph{Acknowledgments.} I would like to express my sincerest thanks to my advisor Alexander Lytchak for his encouragement and support and to Stephan Stadler and Stephan Wiesendorf for critically listening to my different attempts of proving this paper's result. I would like to thank Fernando Galaz-Garcia for drawing my attention to \cite{MR948910} and everybody else who answered my questions. Parts of this work were written during a visit at the Pennsylvania State University. It is a pleasure to thank Anton Petrunin and the mathematical department there for their hospitality and support.

\end{document}